\newtheorem{conjecture}{Conjecture}
\newtheorem{observation}{Observation}
\newtheorem{proposition}{Proposition}
\newtheorem{theorem}{Theorem}
\DeclareMathOperator{\conv}{conv}
\DeclareMathOperator{\pos}{pos}
\DeclareMathOperator{\inte}{int}
\DeclareMathOperator*{\argmax}{arg\,max}
\begin{document}
\title{Hadwiger's conjecture holds for strongly monotypic polytopes}
\author{Vuong Bui\thanks{LIRMM, Universit\'e de Montpellier (\texttt{bui.vuong@yandex.ru})}}
\date{}

\maketitle
\begin{abstract}
    In this short note, we prove Hadwiger's conjecture for strongly monotypic polytopes.
\end{abstract}

\section{Introduction}
Hadwiger's conjecture is an old and curious problem on the covering of a convex body $C$ by a number of translates of a slightly smaller homothety $(1-\epsilon)C$. The necessary number of translates in the worst case was conjectured to be $2^n$ by Hadwiger \cite{hadwiger1972ungeloste} in 1972, where $n$ is the dimension of $C$. While it was proved to hold for every convex body on the plane by Levi \cite{levi1955uberdeckung} in 1955, little is known even in dimension $3$. (We should remark that Levi also studied the problem before Hadwiger, and it is sometimes called the Levi--Hadwiger conjecture.) In particular, we only know that the conjecture holds for centrally symmetric polyhedra \cite{lassak1984centrally} and bodies of constant width in $\mathbb R^3$ \cite{lassak1997illumination}, and zonotopes for every dimension \cite{martini1985some} (see also the survey in \cite[Chapter 3]{brass2005research} for some more information). One has the right to doubt the conjecture, as the evidences so far are not rich, while Borsuk's conjecture, a problem on the covering of convex bodies with respect to the diameters, was disproved in rather high dimensions \cite{kahn1993counterexample}.

In this article, we try to supply more evidences by proving Hadwiger's conjecture for strongly monotypic polytopes. As one may guess that Hadwiger's conjecture for polytopes may be not much easier than that for all convex bodies, simple polytopes appear to be the candidate to tackle. We however even narrow down the scope further to strongly monotypic polytopes, a subclass of monotypic polytopes. These notions were first introduced in \cite{mcmullen1974monotypic}: A polytope $P$ is monotypic if every polytope with same set of normals as $P$ is combinatorially equivalent to $P$. One can see that a monotypic polytope is necessarily a simple polytope, but the converse is not always true (e.g. we translate a facet of a square pyramid so that the apex is the intersection of only $3$ facets to obtain a simple polytope but not a monotypic polytope). On the other hand, a polytope $P$ is strongly monotypic if any other polytope $Q$ with the same set of normals as $P$ satisfies that the arrangements of the hyperplanes containing the facets of $P$ and $Q$ are combinatorially equivalent. One would notice that the monotypy and strong monotypy of polytopes depend only on the sets of normals (one can see that Theorem \ref{thm:strmono-char} and Theorem \ref{thm:mono-char} emphasize this). A recent interesting fact is that strongly monotypic polytopes are equivalent to polytopes with the generating property \cite{bui2023every}. (We remind that a convex body $C$ has the generating property if for every nonempty intersection $I$ of translates of $C$, there exists a convex body $J$ so that the Minknowskii sum $I+J$ is $C$.) Examples of strongly monotypic polytopes can be found in the original article \cite{mcmullen1974monotypic}, which partially characterized these polytopes in $\mathbb R^3$. All strongly monotypic polytopes in $\mathbb R^3$ were later fully characterized in \cite{borowska2008strongly}.

At first, we should state precisely Hadwiger's conjecture.
\begin{conjecture}[Hadwiger's conjecture \cite{hadwiger1972ungeloste}]
    Every convex body $C$ in $\mathbb R^n$ can be covered by at most $2^n$ translates of $(1-\epsilon) C$ for some $\epsilon>0$.
\end{conjecture}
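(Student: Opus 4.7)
The plan is to reduce the full Hadwiger conjecture to its special case for strongly monotypic polytopes (which the rest of the paper establishes) by an approximation argument. Hadwiger's conjecture is a long-standing open problem, so what follows is a research program rather than a proof one can hope to complete in the present note.

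First, I would invoke Boltyanski's equivalence between the Hadwiger covering number and the illumination number $I(C)$: the existence of some $\epsilon>0$ for which $2^n$ translates of $(1-\epsilon)C$ cover $C$ is equivalent to $I(C)\le 2^n$. It therefore suffices to prove $I(C)\le 2^n$ for every convex body $C$ in $\mathbb R^n$. Next I would attempt to show that strongly monotypic polytopes are dense in the space of convex bodies under the Hausdorff metric. Granting such density, the paper's main theorem yields $I(P_k)\le 2^n$ along a sequence of strongly monotypic polytopes with $P_k\to C$; a compactness argument on $(S^{n-1})^{2^n}$ lets one extract a convergent collection of illumination directions and then verify, using $P_k\to C$ in Hausdorff distance, that the limit directions illuminate $\partial C$, upgrading the bound to $C$.

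The main obstacle is the density step. Strong monotypy is a delicate combinatorial condition on the normal fan together with the arrangement it generates, and even its characterisation in $\mathbb R^3$ from \cite{borowska2008strongly} is intricate; it is by no means clear that every polytope admits an arbitrarily small perturbation of facet normals and support numbers that renders it strongly monotypic. A natural attempt would be to start from a generic simplicial refinement of the normal fan of a polytopal approximation of $C$, realise a simple polytope with perturbed facet offsets, and verify strong monotypy combinatorially via the characterisation used later in the paper. Making this construction quantitative, so as to simultaneously enforce Hausdorff closeness to $C$ and strong monotypy, is precisely where the general conjecture has so far resisted this line of attack; absent such a density result one would have to replace strongly monotypic polytopes by a broader class still amenable to the paper's method, which itself seems as hard as the original problem.
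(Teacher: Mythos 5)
The statement you have been asked to prove is Hadwiger's conjecture itself, which the paper states as an open conjecture and does not claim to prove; the paper only establishes the special case of strongly monotypic polytopes (Theorem \ref{thm:main}). Your proposal is candid that it is a research program rather than a proof, and indeed it cannot be completed, but it is worth recording precisely why two of its steps fail. First, the density step is not merely difficult but false for $n\ge 3$: by Theorem \ref{thm:strmono-char}, a strongly monotypic polytope has no $n+1$ normals in conical position, whereas any polytope possessing $n+1$ facet normals lying in a small spherical cap whose tangential components are in convex position in the $(n-1)$-dimensional tangent space (possible as soon as $n-1\ge 2$) does contain such a configuration. Any sequence of polytopes converging to the Euclidean ball in Hausdorff distance eventually acquires normals of this kind, so strongly monotypic polytopes cannot approximate the ball and are not dense among convex bodies in $\mathbb R^n$ for $n\ge 3$. (In the plane every polygon is strongly monotypic, since among any three normals contained in an open half-plane the middle one lies in the positive hull of the other two; this is consistent with Levi's theorem but does not help in higher dimensions.)

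Second, even granting density for some amenable class, the compactness step loses exactly the strict inequality that Hadwiger's conjecture requires. In the paper's argument the admissible $\varepsilon$ is $\delta$ divided by a quantity depending on the vectors $v_j$, and $\delta$ must be small enough that $H_\delta(x)$ stays inside a single cone of the normal fan; as $P_k\to C$ the normal fans refine and these $\delta_k$ tend to $0$, so the guaranteed shrinkage factors $1-\epsilon_k$ tend to $1$. Passing to the limit in the illumination condition $x-v\in\inte P_k$ then yields only $x-v\in C$, i.e., a covering of $C$ by $2^n$ translates of $C$ itself, which is vacuous. Any approximation scheme must come with a uniform lower bound on $\epsilon$ over the approximating class, and none is available here. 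The correct conclusion is that this statement is genuinely open: the paper proves Theorem \ref{thm:main}, not the conjecture, and no proof of the conjecture should be attempted from the material of this note.
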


There is an equivalent problem to Hadwiger's conjecture that was introduced by Boltyanski.
\begin{conjecture}[Boltyanski's illumination conjecture \cite{boltyanski1960problem}]
    For every convex body $C$, there exists a set $V$ of at most $2^n$ vectors so that for every point $x$ on the boundary of $C$, there is a vector $v\in V$ so that $x-v$ is in the interior of $C$.
\end{conjecture}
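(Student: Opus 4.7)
The plan is to attempt a reduction of Boltyanski's conjecture for arbitrary convex bodies to the case of strongly monotypic polytopes, which is the content of the main theorem of this paper. The strategy has two ingredients: density (every convex body is approximable in Hausdorff distance by strongly monotypic polytopes) and stability (the illumination bound passes to Hausdorff limits).

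For the stability step, upper semi-continuity of the illumination number alone does not suffice, as it fails in general: a sufficiently smoothed square can be illuminated by three directions, while the limiting square requires four. What is needed is a quantitative version of the illumination bound for strongly monotypic polytopes, producing not merely a set $V$ of at most $2^n$ vectors, but vectors with a uniform angular margin, so that sufficiently small Hausdorff perturbations of the polytope remain illuminated by the same $V$ (possibly with a slightly diminished margin). I would therefore revisit the proof of the main theorem and extract such quantitative information, bounding the required margin in terms of appropriate geometric parameters (for instance, the minimal angle between adjacent facet normals and the minimal facet area).

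For the density step, I would attempt a chain of approximations: an arbitrary convex body $C$ is first approximated by a polytope $P$; then perturbed to a simple polytope (standard genericity of facet hyperplanes); then to a monotypic polytope (using a characterization of monotypy in terms of the normal fan, as in Theorem \ref{thm:mono-char}); and finally to a strongly monotypic polytope (using Theorem \ref{thm:strmono-char}). Each intermediate step preserves Hausdorff closeness to $C$, and the perturbations can be made smaller than the angular margin of the previous step so that stability is not destroyed.

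The main obstacle is the final passage from monotypic to strongly monotypic. Strongly monotypic polytopes form a rather thin class, as the $\mathbb{R}^3$ characterizations in \cite{mcmullen1974monotypic} and \cite{borowska2008strongly} already indicate, so density in the space of all convex bodies is plausibly too much to hope for. Should density indeed fail at this stage, the fallback is to enlarge the class treated by the main theorem. In particular, if its argument can be pushed to all simple polytopes --- which \emph{are} dense in the space of convex bodies --- then the quantitative approximation scheme above closes the proof. I therefore expect the hardest and most delicate work to be the simultaneous strengthening of the main theorem to a wider polytope class together with the extraction of a margin-quantitative illumination, without which the Hausdorff limit step cannot be taken.
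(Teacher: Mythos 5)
The statement you were given is Boltyanski's illumination conjecture itself, which the paper records as a \emph{conjecture} and does not prove; it is a long-standing open problem, and the paper's actual contribution is only the special case of strongly monotypic polytopes (Theorem \ref{thm:main}). Your proposal is therefore not a proof but a reduction scheme, and both of its pillars fail. Density fails: strong monotypy is a property of the normal set alone (Theorem \ref{thm:strmono-char}), and any polytope whose normals include $n+1$ directions in conical position is excluded. For $n\ge 3$ this is unavoidable once a polytope approximates a smooth body finely: $n+1$ normals clustered in a small spherical cap are strictly separated from $0$, and generically none lies in the positive hull of the others. So strongly monotypic polytopes are nowhere dense among convex bodies in dimension $\ge 3$, and no perturbation repairs this. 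Stability fails too, as you yourself note with the smoothed-square example: the illumination number is semicontinuous in the wrong direction for this argument, and the ``quantitative margin'' you hope to extract from the proof of Theorem \ref{thm:main} is built from the sets $X_1,\dots,X_k$ of Theorem \ref{thm:skeleton}, i.e., it presupposes strong monotypy; it does not transfer to nearby bodies outside the class, whose boundary points near newly created vertices or normal directions need not be illuminated by the same $V$.

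Your fallback --- extending the main theorem to all simple polytopes, which \emph{are} dense --- is itself an open problem widely regarded as essentially as hard as the general conjecture, and even granting it, the Hausdorff-limit step would still require an illumination margin uniform over the approximating sequence, which a bare $2^n$ count per polytope does not supply. In short, there is no proof in the paper to compare yours against: the paper leaves this statement as a conjecture and only uses its restriction to strongly monotypic polytopes, and the proposed reduction cannot close that gap.
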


A discussion on the equivalence of the two conjectures can be found in \cite[Chapter 3]{brass2005research}. We use the illumination conjecture of Boltyanski in the proofs.

\section{The proof for strongly monotypic polytopes}
The following characterization of monotypic polytopes was given in the same article \cite{mcmullen1974monotypic}, which introduces the notion. (The original definition is still by the combinatorial equivalence of polytopes taking the same set of normals.)

\begin{theorem}[McMullen--Schneider--Shephard 1974 \cite{mcmullen1974monotypic}]
    A polytope $P$ is monotypic if and only if every two disjoint primitive subsets $V_1,V_2$ of the set of normals $N(P)$ satisfies $\pos V_1\cap \pos V_2=\{0\}$.
\end{theorem}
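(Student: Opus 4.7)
The plan is to parametrize all polytopes sharing the normal set $N(P)$ by their support function values $h:N(P)\to\mathbb{R}$, so that $Q_h = \{x : \langle x,v\rangle \le h(v) \text{ for all } v\in N(P)\}$, and to track how the combinatorial type of $Q_h$ depends on $h$. Interpreting a primitive subset $V\subseteq N(P)$ as a minimal positively dependent subset of normals, the combinatorial transitions of $Q_h$ as $h$ varies are governed precisely by such primitive relations, since a face of $Q_h$ collapses or emerges exactly when a minimal positive dependence among its defining normals becomes compatible (via the support values) with another, disjoint one.

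For the backward direction, assume the positive-hull condition. Given any $Q$ with $N(Q)=N(P)$, I would deform $Q$ to $P$ along a continuous path of support functions $h_t$, maintaining $N(Q_{h_t}) = N(P)$ throughout. A combinatorial change along this path would require two distinct face configurations of $Q_{h_t}$ to collide geometrically at some parameter $t$; tracing this collision back through the defining hyperplanes, it would force two disjoint primitive subsets of $N(P)$ to share a common nonzero element in their positive hulls, contradicting the hypothesis. Hence $Q$ and $P$ must be combinatorially equivalent.

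For the forward direction, argue the contrapositive. If disjoint primitive subsets $V_1, V_2$ share a nonzero $w \in \pos V_1 \cap \pos V_2$, write $w = \sum_{v\in V_1} \lambda_v v = \sum_{v\in V_2} \mu_v v$ with positive coefficients. Using this shared positive relation, I would construct two polytopes $Q^+$, $Q^-$ both having $N(P)$ as their normal set but with distinct combinatorial types. The construction perturbs $h_P$ by amounts scaled by the $\lambda_v$ (resp.\ $\mu_v$) along $V_1\cup V_2$, with opposite signs for $Q^+$ versus $Q^-$, and leaves $h_P$ unchanged on $N(P)\setminus(V_1\cup V_2)$. For a sufficiently small perturbation, the normal set is preserved, but the opposite shifts cause different face incidences to collapse and expand near the affected facets, producing non-isomorphic face lattices.

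The main obstacle will be executing the forward construction rigorously: ensuring the perturbed polytopes $Q^\pm$ have $N(P)$ as their normal set (no normals disappear, none become redundant) and pinpointing a concrete combinatorial difference, for instance, a vertex present in $Q^+$ but not in $Q^-$. This requires careful local analysis of the vertex figures near the facets indexed by $V_1\cup V_2$, using the minimality of the primitive dependencies to control exactly which incidences persist under each perturbation.
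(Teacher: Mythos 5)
This theorem is cited from McMullen--Schneider--Shephard (1974) and the paper gives no proof of it, so there is nothing internal to compare your attempt against; what follows is an assessment of the attempt on its own.

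Your high-level picture --- parametrize polytopes with prescribed normal set by their support vectors and study the wall structure of this parameter region --- is a legitimate way to think about monotypy, and it is closely related to the type-cone/secondary-fan viewpoint. But there are genuine gaps. For the backward direction, you need a path $h_t$ from $h_Q$ to $h_P$ keeping $N(Q_{h_t})=N(P)$ throughout, and you simply assert one exists. In fact the straight segment works (taking relative-interior points of the $v$-facets of $Q$ and of $P$ and interpolating shows each $v$ stays a facet normal), but this requires an argument, and without it the deformation is unavailable. More seriously, the claim that a combinatorial change along the path would ``force two disjoint \emph{primitive} subsets to share a nonzero element in their positive hulls'' is stated, not argued. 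A wall-crossing gives you a circuit, i.e.\ a minimal linear relation $\sum_{V_1}\lambda_v v=\sum_{V_2}\mu_v v$ with disjoint $V_1,V_2$ of strictly positive coefficients; disjointness and linear independence of each $V_i$ come for free from minimality of the circuit, but \emph{primitivity} in the paper's sense --- that $\pos V_i$ contains no other normal of $P$ --- does not, and an additional reduction step is needed.

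For the forward direction you honestly flag the difficulty yourself: building $Q^+$ and $Q^-$ with the same normal set but different face lattices from a crossing pair $V_1,V_2$. This is exactly the content of the theorem and cannot be left as ``careful local analysis''; you would need to exhibit the two simplicial subdivisions obtained by triangulating $\pos(V_1\cup V_2)$ with apex structure on $V_1$ versus on $V_2$, check both are coherent (induced by convex piecewise-linear lifts), extend to full simplicial fans with ray set $N(P)$, and verify the corresponding polytopes retain every normal as a facet normal. Each of these steps is nontrivial and none is carried out. As written, the proposal is a credible plan with the right conceptual ingredients but is not yet a proof.
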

Note that a subset of normals $V\subseteq N(P)$ is primitive if the normals in $V$ are linearly independent and the positive hull of $V$ is empty of other normals of $P$.

The combinatorial equivalence between monotypic polytopes of the same set of normals suggests the following observation. It is a corollary of the above characterization.
\begin{proposition} \label{prop:unique-fan}
    There is only one simplicial fan by the normals of a monotypic polytope $P$, which is therefore also the normal fan of $P$.
\end{proposition}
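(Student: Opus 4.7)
Since a monotypic polytope is simple, its normal fan is a complete simplicial fan whose rays are exactly the elements of $N(P)$, so existence is automatic; the real content is that any other complete simplicial fan $\mathcal{F}$ with this ray set must coincide with the normal fan.

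My plan is first to establish the intersection lemma
\[
\pos V_1\cap\pos V_2=\pos(V_1\cap V_2)
\]
for every pair of primitive subsets $V_1,V_2\subseteq N(P)$, derived from the McMullen--Schneider--Shephard characterization. Given $x$ in the intersection, write $x=\sum_{v\in V_1}a_v v=\sum_{w\in V_2}b_w w$ with nonnegative coefficients and cancel each common element $u\in V_1\cap V_2$ so that it survives only on the side where its coefficient dominates, with residual $|a_u-b_u|$. The resulting identity has its two sides supported on disjoint subsets $S_1\subseteq V_1$ and $S_2\subseteq V_2$; since any subset of a primitive set is again primitive (linear independence passes to subsets, and $\pos S\subseteq \pos V$ can contain no extra normals), the theorem forces both sides to vanish, and linear independence of $V_1$ then gives $a_v=0$ for $v\notin V_1\cap V_2$ and $a_u=b_u$ for $u\in V_1\cap V_2$, so $x\in\pos(V_1\cap V_2)$.

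With this lemma in hand, let $\pos V$ be any maximal cone of $\mathcal{F}$: it is $n$-dimensional and simplicial, so $V$ is a linearly independent $n$-subset of $N(P)$ that is primitive, because no other normal can lie in $\pos V$ without contradicting the ray structure of $\mathcal{F}$. For a generic point $x$ in the relative interior of $\pos V$, the linear functional $\langle x,\cdot\rangle$ attains its maximum on $P$ at a unique vertex $v$, so $x\in\pos V_v$, where $V_v\subseteq N(P)$ is the primitive set of facet normals at $v$. Applying the lemma to $V$ and $V_v$, the intersection $\pos(V\cap V_v)$ meets the relative interior of $\pos V$, forcing $V\cap V_v=V$ and then $V=V_v$ by cardinality. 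Hence every maximal cone of $\mathcal{F}$ is a normal cone of $P$, and completeness pins down $\mathcal{F}$ as the normal fan.

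The main obstacle is the rearrangement step in the lemma, namely verifying that the produced supports $S_1,S_2$ are genuinely disjoint primitive subsets so that the theorem applies; once the lemma is in place, the rest is a short dimension-and-covering argument.
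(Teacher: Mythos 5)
Your proof is correct and rests on the same mechanism as the paper's: cancel the common terms in the two nonnegative representations to produce disjoint primitive subsets whose positive hulls intersect nontrivially, then invoke the McMullen--Schneider--Shephard criterion. You package this as the cleaner identity $\pos V_1\cap\pos V_2=\pos(V_1\cap V_2)$ and make explicit the final step of identifying each maximal cone of a given fan with a normal cone of $P$, which the paper leaves implicit, but the underlying argument is the same.
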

\begin{proof}
    Suppose we have two different simplicial fans, that is there are two distinct primitive subsets $V_1,V_2$ of $N(P)$ so that the positive hulls of $V_1$ and $V_2$ intersect at a point in the relative interior of both, that is
    \[
        \sum_{x_i\in V_1} \lambda_i x_i = \sum_{y_j\in V_2} \theta_j y_j
    \]
    where all the coefficients $\lambda_i,\theta_j$ are positive.
    
    We can assume that $V_1,V_2$ are disjoint, since otherwise, say $x_{i^*}=y_{j^*}$, we can remove it from one of two sides, or both, by
    \[
        (\lambda_{i^*} - \min\{\lambda_{i^*},\theta_{j^*}\}) x_{i^*} + \sum_{x_i\in V_1\setminus\{x_{i^*}\}} \lambda_i x_i = (\theta_{j^*} - \min\{\lambda_{i^*},\theta_{j^*}\}) y_{j^*} + \sum_{y_j\in V_2\setminus\{y_{j^*}\}} \theta_j y_j,
    \]
    where the coefficients $\lambda_{i^*} - \min\{\lambda_{i^*},\theta_{j^*}\}$ and $\theta_{j^*} - \min\{\lambda_{i^*},\theta_{j^*}\}$ are still nonnegative with at least one of them being zero.
    After removing, the new subsets (containing the points with positive coefficients) are still distinct and primitive, but with one less point in the intersection. (Note that the positive hulls still intersect each other.)
    
    However, when $V_1,V_2$ are both primitive and disjoint, we have a contradiction to the monotypy of $P$. Therefore, every simplicial fan by the normals $N(P)$ is identical. The conclusion follows.
\end{proof}

In \cite{bui2023every}, another characterization was given for monotypic and strongly monotypic polytopes, with the one for strongly monotypy given below. (The one for monotypy is given in Theorem \ref{thm:mono-char}.) 
\begin{theorem}[Bui 2023 \cite{bui2023every}]
\label{thm:strmono-char}
    An $n$-dimensional polytope $P$ is strongly monotypic if and only if every $n+1$ normals of $P$ are not in conical position.
\end{theorem}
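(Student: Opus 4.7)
The plan is to trace how the combinatorial type of the facet-hyperplane arrangement changes as one varies the offsets of a polytope with fixed normal set, and to convert ``conical position'' into a sign condition on linear dependencies among the offsets. Since strong monotypy implies monotypy, Proposition~\ref{prop:unique-fan} applies and the normal fan of $P$ is uniquely determined by $N(P)$. After translating so that the origin is interior to $P$, I parametrize polytopes with normals $N(P)=\{u_1,\dots,u_m\}$ and origin interior by their offset tuples $t=(t_1,\dots,t_m)\in\mathbb R_{>0}^m$ via $P(t)=\{x:u_i\cdot x\le t_i\,\forall i\}$; the set $\mathcal T\subset\mathbb R^m_{>0}$ of $t$ for which every $u_i$ is realized as a facet normal is an open convex cone (closed under Minkowski sums, which preserve the common normal fan), hence path-connected, and every $Q$ with $N(Q)=N(P)$ arises, up to translation, as $P(t)$ for some $t\in\mathcal T$. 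The combinatorial class of the arrangement $\{u_i\cdot x=t_i\}_{i=1}^m$ is locally constant on $\mathcal T$ and can only jump where some $n+1$ of the hyperplanes become concurrent at a common point.

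By a standard Farkas/Gordan-style duality, an $(n+1)$-tuple $u_{k_1},\dots,u_{k_{n+1}}$ is \emph{not} in conical position if and only if there exist $\lambda_1,\dots,\lambda_{n+1}\ge 0$, not all zero, with $\sum_j\lambda_j u_{k_j}=0$; equivalently, $0\in\pos\{u_{k_1},\dots,u_{k_{n+1}}\}$ nontrivially. On the other hand, the $n+1$ hyperplanes $u_{k_j}\cdot x=t_{k_j}$ are concurrent if and only if every linear relation $\sum_j\mu_j u_{k_j}=0$ also satisfies $\sum_j\mu_j t_{k_j}=0$. For the direction ``every $(n+1)$-subset not in conical position $\Rightarrow$ $P$ strongly monotypic'', the nonnegative relation $\lambda$ for any such subset forces $\sum_j\lambda_j t_{k_j}>0$ for every $t\in\mathbb R^m_{>0}$, so concurrence of these $n+1$ hyperplanes fails throughout $\mathcal T$; hence no $(n+1)$-fold concurrence is ever reached, the combinatorial class of the arrangement is constant on the connected set $\mathcal T$, and every $Q$ yields an arrangement combinatorially equivalent to that of $P$.

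For the converse, suppose $\{u_{k_1},\dots,u_{k_{n+1}}\}$ is in conical position, so every nonzero linear relation $\mu$ among these normals has mixed-sign coefficients. The linear functional $t\mapsto\sum_j\mu_j t_{k_j}$ is nonzero on $\mathbb R^m$ and hence nonconstant on the open set $\mathcal T$, so $\mathcal T$ contains offsets $t^{(0)},t^{(1)}$ on opposite sides of $\{\sum_j\mu_j t_{k_j}=0\}$. A direct computation using the relation $\mu$ shows that the sign of $\sum_j\mu_j t_{k_j}$ controls on which side of the hyperplane $u_{k_{n+1}}\cdot x=t_{k_{n+1}}$ the point $\bigcap_{j\le n}\{u_{k_j}\cdot x=t_{k_j}\}$ lies, so the arrangements of $P(t^{(0)})$ and $P(t^{(1)})$ are combinatorially distinct, contradicting strong monotypy. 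I expect the main obstacle to be justifying that $(n+1)$-fold concurrences are really the only mechanism that changes the combinatorial class of the arrangement inside $\mathcal T$---so that tracking the sign conditions $\sum_j\mu_j t_{k_j}=0$ suffices---and handling cleanly the case in which the selected $n+1$ normals span a proper subspace, where several independent relations $\mu$ coexist and the sign arguments must be applied to a well-chosen one of them.
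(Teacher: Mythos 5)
The paper does not prove this statement---it is quoted from \cite{bui2023every}---so there is no in-paper proof to compare against; judged on its own merits, your proposal has two genuine gaps. The first is that your Farkas/Gordan-style translation is false. ``Not in conical position'' is the disjunction: either the $n+1$ normals are not separated from $0$ (this, and only this, is equivalent to a nontrivial nonnegative dependence $\sum_j\lambda_j u_{k_j}=0$), or one normal lies in the positive hull of the others, which yields only a mixed-sign dependence. Concretely, $u_1=(1,0)$, $u_2=(0,1)$, $u_3=(1,1)$ are not in conical position, admit no nontrivial nonnegative dependence, and the lines $x=t_1$, $y=t_2$, $x+y=t_3$ are concurrent at many positive offsets ($t_3=t_1+t_2$). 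So positivity of $t$ alone does not exclude $(n+1)$-fold concurrences, and the forward direction as written collapses. What actually rescues this case is membership in $\mathcal T$, not $t>0$: if $u_{k_{n+1}}=\sum_j c_j u_{k_j}$ with $c_j\ge 0$ and $u_{k_{n+1}}$ is a genuine facet normal of $P(t)$, then $t_{k_{n+1}}=h_{P(t)}(u_{k_{n+1}})\le\sum_j c_j t_{k_j}$, and equality would force that facet into an intersection of at least two other facets, of dimension at most $n-2$; hence $t_{k_{n+1}}<\sum_j c_j t_{k_j}$ strictly throughout $\mathcal T$. This irredundancy argument is essential and missing. (Relatedly, your justification that $\mathcal T$ is convex because Minkowski sums ``preserve the common normal fan'' is inaccurate---sums of polytopes with the same normal set but different fans can acquire new facet normals---though convexity of $\mathcal T$ can be salvaged by a different argument.)

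The second gap is in the converse. From ``the functional $t\mapsto\sum_j\mu_j t_{k_j}$ is nonconstant on the open set $\mathcal T$'' you infer that $\mathcal T$ meets both open sides of the wall; that is a non sequitur, since any nonzero linear functional is nonconstant on any open set yet may be one-signed on $\mathcal T$---which is exactly what happens in the example above, so the same inference would ``disprove'' strong monotypy for tuples that are not in conical position. The real content of the theorem in this direction is precisely the construction, using the conical-position hypothesis, of two offset vectors in $\mathcal T$ (all normals still facet normals) with $\sum_j\mu_j t_{k_j}$ of opposite signs, followed by an argument that the resulting arrangements are combinatorially inequivalent; neither step is supplied. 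Finally, the two difficulties you flag yourself---that $(n+1)$-fold concurrences are the only mechanism by which the combinatorial type of the arrangement can change on $\mathcal T$, and the case where the chosen $n+1$ normals span a proper subspace so that the dependence space has dimension greater than one---are genuinely needed and remain open in your write-up, so the proposal is at present an outline with the hardest steps unproved.
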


A clarification on terminology: A set of points is said to be in \emph{conical position} if it is separated from $0$ and none of the points is in the positive hull of the others. (A set of points is said to be separated from $0$ if there is a hyperplane strictly separating the set from $0$.)

One can easily verify the following observation.
\begin{observation}
\label{obs:signs}
    Suppose we have $n$ linearly independent points $a_1,\dots,a_n$ and a point $x$ in the linear span of these points with the unique representation $x=\sum_i\lambda_i a_i$. Then we have:
    \begin{itemize}
        \item The points $x,a_1,\dots,a_n$ are not separated from $0$ if and only if all the coefficients $\lambda_i$ are nonpositive.
        \item One of the points $x,a_1,\dots,a_n$ is in the positive hull of the others if and only if either (i) all the coefficients $\lambda_i$ are nonnegative or (ii) some $\lambda_i>0$ and all other $\lambda_j\le 0$ ($j\ne i$).
    \end{itemize}
\end{observation}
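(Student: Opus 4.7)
Both bullets reduce to routine linear algebra once the geometric conditions are restated in terms of the unique expansion $x = \sum_i \lambda_i a_i$. My plan is to handle the two equivalences independently using the same two ingredients: the classical fact that a finite set $S$ is strictly separated from $0$ by a hyperplane if and only if $0 \notin \conv S$, and the linear independence of $a_1,\dots,a_n$, which forces any identity among them to have matching coefficients.

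For the first bullet, I would translate ``not separated from $0$'' into $0 \in \conv\{x, a_1,\dots,a_n\}$ and write $0 = \mu x + \sum_i \mu_i a_i$ with $\mu,\mu_i \ge 0$ and $\mu + \sum_i \mu_i = 1$. Substituting $x = \sum_i \lambda_i a_i$ and using linear independence gives $\mu \lambda_i + \mu_i = 0$ for every $i$. Since the normalization forces $\mu > 0$ (if $\mu = 0$ all $\mu_i$ vanish), one reads off $\lambda_i = -\mu_i/\mu \le 0$. Conversely, from $\lambda_i \le 0$ the identity $x + \sum_i (-\lambda_i) a_i = 0$ holds with nonnegative coefficients summing to $1 + \sum_i (-\lambda_i) > 0$, so dividing exhibits $0$ as a convex combination.

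For the second bullet, I would split on which point lies in the positive hull of the others. If $x \in \pos\{a_1,\dots,a_n\}$, uniqueness of the expansion in the independent family $a_1,\dots,a_n$ forces $\lambda_i \ge 0$, which is case (i). If instead some $a_k$ lies in $\pos(\{x\} \cup \{a_j : j \ne k\})$, I would write $a_k = \mu x + \sum_{j \ne k} \mu_j a_j$ with $\mu,\mu_j \ge 0$, substitute the expansion of $x$, and match coefficients in the basis $a_1,\dots,a_n$. The coefficient of $a_k$ gives $\mu \lambda_k = 1$, so $\mu > 0$ and $\lambda_k > 0$; the coefficient of $a_j$ ($j \ne k$) gives $\mu_j = -\mu \lambda_j \ge 0$, hence $\lambda_j \le 0$, which is case (ii). The converses are obtained by reversing these computations, e.g. in case (ii) solving $x = \sum_i \lambda_i a_i$ for $a_k$ explicitly exhibits $a_k$ in the required positive hull.

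There is no genuine obstacle; the only point needing a moment of care is the separation half of the first bullet, where one must note that for a finite set $S$ with $0 \notin \conv S$ the closed convex hull is compact and disjoint from $0$, so a hyperplane strictly separating $S$ from $0$ exists by the standard separation theorem.
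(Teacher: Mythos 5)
Your proof is correct and fills in the details that the paper deliberately omits (the paper dismisses the observation as ``obvious by the uniqueness of the representation''). Your argument is exactly the intended one: reduce strict separation to $0\notin\conv\{x,a_1,\dots,a_n\}$ via the standard separation theorem for a point and a compact convex set, then match coefficients in the basis $a_1,\dots,a_n$ using uniqueness.
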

We omit the proof since it is obvious by the uniqueness of the representation

While the characterization in Theorem \ref{thm:strmono-char} is somewhat local, it actually allows us to specify a global picture of the normals of a strongly monotypic polytope.
\begin{theorem} \label{thm:skeleton}
        The set $X$ of the normals of an $n$-dimensional strongly monotypic polytope $P$ contains some $k$ disjoint subsets
	$X_1,\dots, X_k$ such that 
 
        (i) each $X_i$ is the set of
	vertices of a simplex whose relative interior contains $0$,
	
        (ii) the linear spaces spanned by each $X_i$ are linearly
	independent,
        
        (iii) these linear spaces directly sum up to
	$\mathbb R^n$.  
\end{theorem}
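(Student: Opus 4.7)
The plan is to induct on $n$. The base case $n=0$ is vacuous. For the inductive step, first note that $P$ being bounded forces the normals $X$ to positively span $\mathbb R^n$, so $0$ admits a nontrivial positive combination from $X$. I would take $X_1 \subseteq X$ to be a minimal subset supporting a positive relation $\sum_{v \in X_1} \lambda_v v = 0$ with every $\lambda_v > 0$. An exchange argument analogous to the one in the proof of Proposition~\ref{prop:unique-fan} --- mixing any affine dependence into the positive relation to kill a coefficient --- forces $X_1$ to be affinely independent, so $X_1$ is the vertex set of a simplex with $0$ in its relative interior. Set $V_1 = \spn X_1$ and $d_1 = |X_1| - 1$.

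Next, set $W = V_1^\perp$, let $\pi \colon \mathbb R^n \to W$ be the orthogonal projection, and set $X'' = \pi(X \setminus V_1)$. The goal is to verify that $X''$ inherits the characterization of Theorem~\ref{thm:strmono-char} inside $W$: every $(n - d_1) + 1$ elements of $X''$ fail to be in conical position. Given such $T'' \subseteq X''$, lift it to $T \subseteq X \setminus V_1$ and adjoin $X_1 \setminus \{v_0\}$ for an arbitrary $v_0 \in X_1$ to obtain an $(n+1)$-subset of $X$. Applying Theorem~\ref{thm:strmono-char} and then projecting the resulting relation via $\pi$, using that $X_1 \setminus \{v_0\}$ is a basis of $V_1$ (hence killed by $\pi$) and that no vertex of the simplex $\conv X_1$ lies in the positive hull of the other vertices (a consequence of $0 \in \relint \conv X_1$), a short case analysis yields the required conclusion for $T''$. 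Since $X''$ also positively spans $W$, the inductive hypothesis then produces disjoint subsets $X_2'', \ldots, X_k''$ of $X''$ satisfying (i)--(iii) inside $W$.

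The final step is to lift each $X_i''$ to a subset $X_i \subseteq X \setminus V_1$ by choosing, for every $w \in X_i''$, a preimage $v_w \in X$ with $\pi(v_w) = w$. The relation $\sum_w \lambda_w w = 0$ witnessing $0 \in \relint \conv X_i''$ lifts to a residual $r_i := \sum_w \lambda_w v_w \in V_1$, and the required simplex property of $X_i$ in $\mathbb R^n$ demands $r_i = 0$. When all residuals vanish, the spans $V_2, \ldots, V_k$ project isomorphically onto the pairwise independent $V_2'', \ldots, V_k''$ in $W$, which together with $V_1$ yields the direct-sum decomposition $V_1 \oplus V_2 \oplus \cdots \oplus V_k = \mathbb R^n$.

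The hard part is arranging the lifts so that every $r_i$ vanishes, since the naive choice need not achieve this. My intended remedy is to select $X_1$ so that $\dim V_1$ is maximum among all simplices around $0$ contained in $X$, and then argue that any nonzero residual $r_i$ would allow the positive relation on $X_1$ and the lifted relation on $X_i$ to be combined (with sign and scaling chosen to preserve positivity, using the freedom provided by the positive dependence on $X_1$) into a positive dependence on $X_1 \cup X_i$ summing to $0$, witnessing a simplex around $0$ of strictly larger span than $V_1$ and contradicting maximality. Formalising this combination, in particular reducing the resulting affine dependence to obtain a genuine larger simplex inside $X$, is the technical heart of the argument.
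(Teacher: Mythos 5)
Your reduction steps up to the lifting stage do check out: a minimal positive circuit in $X$ is indeed the vertex set of a simplex with $0$ in its relative interior, the projected set $X''=\pi(X\setminus V_1)$ does inherit the hypothesis of Theorem~\ref{thm:strmono-char} inside $W$ (your case analysis with $X_1\setminus\{v_0\}$ adjoined works, since $X_1\setminus\{v_0\}$ is linearly independent and is killed by $\pi$), and $X''$ positively spans $W$. The genuine gap is exactly where you place it, and the remedy you sketch does not close it. If some residual $r_i\ne 0$, combining the lifted relation $\sum_w\lambda_w v_w=r_i$ with the positive relation on $X_1$ (using $\pos X_1=V_1$) only shows that $0$ is a positive combination of all points of $Y=X_1\cup\{v_w\}$. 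But $Y$ is affinely dependent: $\dim\spn Y=\dim V_1+\dim\spn X_i''=|Y|-2$. Reducing this positive dependence by Carath\'eodory to an affinely independent subset $Y'\subseteq Y$ with $0\in\relint\conv Y'$ gives no lower bound on $\dim\spn Y'$; it can collapse to dimension $1$. Concretely, for $\{\pm e_1,\pm e_2\}$ (the normals of the square, which is strongly monotypic) the origin is a positive combination of all four points, whose span is all of $\mathbb R^2$, yet every simplex with vertices in this set and $0$ in its relative interior is a segment. So ``a positive dependence whose support has large span'' does not yield ``a simplex around $0$ of large dimension'', and no contradiction with the maximality of $\dim V_1$ follows from the combination alone. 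To rule out nonzero residuals you would have to invoke the conical-position condition of Theorem~\ref{thm:strmono-char} once more at this point (and also deal with the fact that $r_i$ depends on the choice of preimages $v_w$, which your argument leaves free); the proposal supplies neither, and this is precisely the content the theorem needs.

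For comparison, the paper avoids the lifting problem altogether by a global argument: it fixes a maximal set $B\subset X$ in conical position (necessarily a basis of $\mathbb R^n$, by Theorem~\ref{thm:strmono-char}), shows every other normal has all nonnegative or all nonpositive coordinates in this basis, proves that the supports of two normals lying in the same orthant are nested or disjoint (again via the conical-position condition), and then reads off the $X_i$ from normals in $\pos\{-b_1,\dots,-b_n\}$ with disjoint supports covering $B$. If you wish to keep your inductive scheme, the missing lemma is essentially an analogue of that nested-or-disjoint statement, so expect to use strong monotypy again in the lifting step rather than only in verifying the hypothesis for $X''$.
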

\begin{proof}
    Among the elements of $X$ take a set of points $B$ such that $B$ is in
    conical position, $B$ spans an $n$-dimensional space and its positive
    hull is maximal in the sense that no other such points have the
    positive hull being a strict superset of the positive hull of $B$.  Since
    every $n+1$ points of $X$ are not in conical position by Theorem \ref{thm:strmono-char}, it follows that $B$ is a set of $n$ linearly independent points $b_1,\dots,b_n$.
    
    Consider any other point $x=\sum_{i=1}^n \lambda_i b_i$ in $X$.  The
    $n+1$ points $x,b_1,\dots,b_n$ either (iv) have the convex hull
    containing $0$, for which all $\lambda_i$ are nonpositive or (v) have
    one of them in the positive hull of the others. In the case (v), we can conclude that all $\lambda_i$ are nonnegative,  
    since we cannot have the other possibility as in Observation \ref{obs:signs} that
    there is only one positive coefficient, say $\lambda_i > 0$, and
    other nonpositive coefficients. Indeed, suppose otherwise, we have
    $b_i\in\pos(\{x,b_1,\dots,b_n\}\setminus\{b_i\})$, which is a
    contradiction by the strict inclusion
    \[
        \pos(\{x,b_1,\dots,b_n\}\setminus\{b_i\})\supset \pos\{b_1,\dots,b_n\}.
    \]
    Note that the former positive hull contains $x$ while the latter does not (at least some $\lambda_j<0$ with $j\ne i$, otherwise $x$ and $b_i$ are the same normal). So, it is always either (iv) with all nonpositive $\lambda_i$ or (v) with all nonnegative $\lambda_i$.
    
    Let the Cartesian support of a point $x$ be the set $\{b_i: \lambda_i\ne 0\}$ for
    $x=\sum_{i=1}^n \lambda_i b_i$.
    
    We show that the Cartesian supports of two points of $X$ both in
    $\pos\{b_1,\dots,b_n\}$ or both in $\pos\{-b_1,\dots,-b_n\}$ are
    either disjoint or one is a subset of the other.  Let the two points
    be $x,y$ with $x=\sum_{i=1}^n \lambda_i b_i$ and $y=\sum_{i=1}^n
    \theta_i b_i$.  Suppose the Cartesian support of $x$ be
    $\{b_1,\dots,b_{k_2}\}$ and the Cartesian support of $y$ be
    $\{b_{k_1},\dots,b_{k_3}\}$ with $1<k_1\le k_2<k_3$.  Since the points
    in $\{y,b_1,\dots,b_n\}\setminus\{b_{k_1}\}$ are linearly independent,
    the linear relation
    \[
    	x=\left(\sum_{i=1}^{k_1-1} \lambda_i b_i\right) +
    	\frac{\lambda_{k_1}}{\theta_{k_1}} y + \sum_{i=k_1+1}^{k_2}
    	\left(\lambda_i - \frac{\lambda_{k_1}}{\theta_{k_1}} \theta_i\right) b_i
    	- \sum_{i=k_2+1}^{k_3} \frac{\lambda_{k_1}}{\theta_{k_1}}
    	\theta_i b_i
    \]
    is unique.  It raises a contradiction as the $n+1$
    points $\{x,y\}\cup\{b_1,\dots,b_n\}\setminus\{b_{k_1}\}$ are in conical position, with either two positive and one negative coefficients (if $x,y\in\pos\{b_1,\dots,b_n\}$) or two negative and one positive coefficients (if $x,y\in\pos\{-b_1,\dots,-b_n\}$) of the points $b_1, y, b_{k_2+1}$ on
    the right hand side (by Observation \ref{obs:signs}).
    
    Since $0\in\inte\conv X$, there must be some $k$ points
    $x_1,\dots,x_k$ all in $\pos\{-b_1,\dots,-b_n\}$ and their Cartesian
    supports are disjoint while the union of the Cartesian supports is
    $\{b_1,\dots,b_n\}$.  Let $X_i$ for each $i=1,\dots,k$ be the union of
    the Cartesian support of $x_i$ and the point $x_i$ itself, we obtain
    the desired sets $X_1,\dots,X_k$, which complete the proof.
\end{proof}

Now we are ready to prove Hadwiger's conjecture for strongly monotypic polytopes.
\begin{theorem}
\label{thm:main}
    For every strongly monotypic polytope $P$ with the sets $X_1,\dots,X_k$ as in Theorem \ref{thm:skeleton}, there exists a set $V$ of $|X_1|\dots|X_k|$ vectors so that for every point $x$ on the boundary of $P$, there is a vector $v\in V$ so that $x-v$ is in the interior of $P$. In particular, Hadwiger's conjecture holds for strongly monotypic polytopes since $|X_1|\dots|X_k|\le 2^n$ for $P\subset\mathbb R^n$.
\end{theorem}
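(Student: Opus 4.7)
My plan is to exhibit $\prod_{i=1}^k |X_i|$ illumination directions, indexed by tuples $\tau = (m_1,\ldots,m_k) \in X_1 \times \cdots \times X_k$, and verify Boltyanski's illumination condition at every boundary point of $P$. The construction uses the direct sum $\mathbb R^n = L_1 \oplus \cdots \oplus L_k$ provided by Theorem \ref{thm:skeleton} with $L_i = \spn X_i$, together with the dual decomposition $\mathbb R^n = L_1^\perp \oplus \cdots \oplus L_k^\perp$, where $L_i^\perp$ denotes the annihilator of $\bigoplus_{j \ne i} L_j$. The auxiliary polytope
\[
    P' = \{p \in \mathbb R^n : \langle n, p\rangle \le c_n \text{ for every } n \in \textstyle\bigcup_i X_i\},
\]
obtained by keeping only the facet inequalities of $P$ whose normals lie in $\bigcup_i X_i$, then splits as a Cartesian product $\Delta_1 \times \cdots \times \Delta_k$ of simplices $\Delta_i \subset L_i^\perp$ with facet-normal set $X_i$. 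In particular $P'$ has exactly $\prod_i |X_i|$ vertices, one per tuple $\tau$, with $m_i$ indexing the facet of $\Delta_i$ opposite the chosen vertex. For each $\tau$ I define $v_\tau = \varepsilon(u_1 + \cdots + u_k)$, where $u_i \in L_i^\perp$ is the unique vector with $\langle n, u_i\rangle = 1$ for every $n \in X_i \setminus \{m_i\}$; uniqueness follows because $X_i \setminus \{m_i\}$ is a basis of $L_i$ and $\dim L_i^\perp = \dim L_i$. The orthogonality $\langle L_j, L_i^\perp\rangle = 0$ for $j \ne i$ then yields $\langle n, v_\tau\rangle = \varepsilon > 0$ for every $n \in \bigcup_i (X_i \setminus \{m_i\})$, showing that $v_\tau$ illuminates the $\tau$-vertex of $P'$.

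The heart of the argument is that every vertex $w$ of $P$ is illuminated by some $v_\tau$. Because $P$ is carved out of $P'$ by intersecting with the finitely many extra half-spaces $\langle y, p\rangle \le c_y$ for $y \in N(P) \setminus \bigcup_i X_i$, the normal fan of $P$ refines that of $P'$: the normal cone $\pos(F_w)$ at any vertex $w$ of $P$, where $F_w$ is the set of facet normals of $P$ through $w$, sits inside some maximal cone $\pos(\bigcup_i(X_i \setminus \{m_i\}))$ of $P'$'s fan. Granting this, every $n \in F_w$ is a nonnegative combination of $\bigcup_i(X_i \setminus \{m_i\})$, so $\langle n, v_\tau\rangle > 0$; picking $\varepsilon$ uniformly small over all tuples then gives $w - v_\tau \in \inte P$. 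Passing from vertices to arbitrary boundary points is immediate, since any boundary point lies on a face whose facet-normal set is contained in $F_w$ for an adjacent vertex $w$. Finally, $\prod_i (d_i + 1) \le \prod_i 2^{d_i} = 2^n$ because $d_i + 1 \le 2^{d_i}$ for $d_i \ge 1$ and $\sum_i d_i = n$.

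The main obstacle is the fan-refinement claim. I would prove it by iteratively adding the extra half-space constraints: intersecting a polytope $Q$ with one further half-space can only subdivide the cones of $Q$'s normal fan, since the outward normal of the new half-space lies in some cell of the current fan and the resulting fan refines the old one; iterating over the finitely many extras produces a normal fan of $P$ refining that of $P'$.
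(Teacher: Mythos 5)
Your overall plan is the same as the paper's: for each tuple $\tau=(m_1,\dots,m_k)$ consider the cone $\mathcal C_\tau = \pos\bigl(\bigcup_i (X_i\setminus\{m_i\})\bigr)$, pick an illumination direction in the interior of its dual, and show that every boundary point's ``active'' normals land in some $\mathcal C_\tau$. The framing of $\mathcal C_\tau$ as the normal cone at a vertex of the product of simplices $P'=\Delta_1\times\cdots\times\Delta_k$ is a pleasant way to organize it, and the construction of $v_\tau$ via the dual decomposition $\mathbb R^n=L_1^\perp\oplus\cdots\oplus L_k^\perp$ is correct. However, there are two genuine gaps.

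First, the fan-refinement step is argued incorrectly. You claim that intersecting a polytope $Q$ with one more half-space always produces a normal fan refining that of $Q$. That is false in general: take $Q=[-1,1]^2$ and cut it with $x+y\le -1$. The result is the triangle $\conv\{(-1,-1),(0,-1),(-1,0)\}$, whose normal fan contains the maximal cone $\pos\{(0,-1),(1,1)\}$; this cone has the ray through $(1,0)$ in its interior and therefore straddles two quadrants of the original fan, so the fan does not refine. The conclusion you need (the normal fan of $P$ refines that of $P'$) is true in the present setting, but its proof must use strong monotypy --- the paper establishes it via Proposition~\ref{prop:unique-fan}, which says that the simplicial fan on the normals of a monotypic polytope is unique, hence any triangulation of the $\mathcal C_\tau$'s by normals of $P$ must coincide with the normal fan of $P$. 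Your general half-space argument cannot replace this.

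Second, the passage from vertices to arbitrary boundary points is not ``immediate'' with a uniform $\varepsilon$. For $x$ in the relative interior of a face $F$ and $w$ a vertex of $F$, the facets through $x$ are indeed contained in $F_w$, so $\langle n, v_\tau\rangle>0$ handles them. But for normals $n\notin F_w$ you need $\varepsilon < (h_P(n)-\langle n,x\rangle)/(-\langle n,v_\tau\rangle)$, and the numerator $h_P(n)-\langle n,x\rangle$ can be arbitrarily small as $x$ approaches the facet of $n$ along $F$; the set of ``near-active'' normals changes as $x$ moves, and so must the chosen direction. Finitely many vertices give you a uniform $\varepsilon$ for the vertices, but not for the rest of $\partial P$. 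The paper handles precisely this via the sets $H_\delta(x)$ and a compactness argument showing there is a $\delta>0$ such that $H_\delta(x)$ lies in a single cone of the normal fan for every $x\in\partial P$; you need some version of that argument to close the proof.
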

Note that the proof below uses Boltyanski's illumination version of the conjecture.
\begin{proof}
    For each $x_1\in X_1,\dots,x_k\in X_k$, we consider the cone $\mathcal C = \pos\{(X_1\setminus\{x_1\})\cup\dots\cup (X_k\setminus\{x_k\})\}$. The union of these $q=|X_1|\dots|X_k|$ cones $\mathcal C_1,\dots,\mathcal C_q$ is $\mathbb R^n$ and they are disjoint except at the boundaries. For each cone $\mathcal C_i$, we have a vector $v_i$ so that $\langle v_i, y\rangle > 0$ for any $y\in \mathcal C_i$.
    
    In each cone $\mathcal C_i$, we continue to triangulate further into smaller simplicial cones with the normals in $N(P)\cap \mathcal C_i$. There is a unique way to triangulate these cones $\mathcal C_1,\dots,\mathcal C_q$ due to Proposition \ref{prop:unique-fan}, which also states that the resulting simplicial fan is also the normal fan of $P$. Each cone $K$ in the resulting simplicial fan corresponds to a face $F$ of $P$, in the sense that
    \[
        K=\{\vec{n}\in\mathbb R^n \mid F\subseteq\argmax_{x\in P} \langle \vec{n},x\rangle\}.
    \]

    To finish the proof, we show that the set $V=\{\varepsilon v_1,\dots,\varepsilon v_q\}$ for any sufficiently small $\varepsilon$ satisfies the requirements of the theorem.

    Given some $\delta>0$, we define $H_\delta(x)$ for a point $x$ on the boundary of $P$ to be the set of normals $\vec{n}\in N(P)$ so that the distance of $x$ to the facet of $\vec{n}$ is at most $\delta$, that is
    \[
        H_\delta(x) = \{\vec{n}\in N(P): h_P(\vec{n}) - \langle \vec{n},x\rangle \le \delta\},
    \]
    where we use the standard notation $h_P(\vec{n})=\sup_{p\in P} \langle \vec{n}, p\rangle$.

    We can choose $\delta$ sufficiently small so that $H_\delta(x)$ for any $x\in\partial P$ is contained in a cone of the normal fan. Indeed, assume we cannot choose such a $\delta$. It follows that we have a sequence
$\delta_k\to 0$ and a sequence $x_k\in \partial P$ such that
$H_{\delta_k}(x_k)$ is not contained in a cone of the normal fan.
Passing to a subsequence we assume that $x_k\to x_0\in\partial P$.
For any normal $\vec n\in N(P)$ such that $h_P(\vec n) > \langle \vec n,
x_0\rangle$, the inequality $h_P(\vec n) - \langle \vec n, x_k\rangle \le \delta_k$
tends to the false inequality $h_P(\vec n) - \langle \vec n, x_0\rangle \le 0$
and therefore fails for sufficiently large $k$. Since the set $N(P)$ is
finite, all such inequalities fail for sufficiently large $k$, which
effectively means that $H_{\delta_k}(x_k)$ is contained in the normal
cone of the point $x_0$.
    
    Now, consider any vertex $x$ on the boundary of $P$.
    Let $K$ be the cone of the normal fan that contains $H_\delta(x)$. Let $\mathcal C_j$ contain $K$.
    For every $\vec{n}\in H_\delta(x)\subseteq K\subseteq\mathcal C_j$ and every $\varepsilon>0$, we have
    \[
        \langle \vec{n}, x-\varepsilon v_j\rangle = \langle \vec{n},x\rangle - \varepsilon \langle \vec{n}, v_j\rangle < \langle \vec{n},x\rangle.
    \]
    If $x-\varepsilon v_j$ is not in the interior of $P$, it is due to another normal $\vec{m}\in N(P)\setminus H_\delta(x)$, in the sense that
    \[
        \langle \vec{m}, x-\varepsilon v_j\rangle \ge h_P(\vec{m}).
    \]
    It will not happen if either $\langle \vec{m}, v_j\rangle \ge 0$ or
    \[
        \varepsilon < \frac{h_P(\vec{m}) - \langle \vec{m},x\rangle}{-\langle \vec{m}, v_j\rangle}.
    \]
    Since $h_P(\vec{m}) - \langle \vec{n},x\rangle > \delta$, it suffices to set
    \[
        \varepsilon = \frac{\delta}{\min\limits_{\vec{n}\in N(P),j=1,\dots,q:\ \langle \vec{n}, v_j\rangle < 0} |\langle \vec{n}, v_j\rangle|}.
    \]
    The set $V=\{\varepsilon v_1,\dots,\varepsilon v_q\}$ with this value of $\varepsilon$ satisfies the requirements.
\end{proof}

\section{Some discussions on monotypic polytopes}
Hadwiger's conjecture for all polytopes or more generally for all convex bodies may ask for more techniques than what have been presented in this article. However, it seems that the approach in this article can be readily applied to all monotypic polytopes, which are not necessarily strongly monotypic. 
Note that Proposition \ref{prop:unique-fan} applies to all monotypic polytopes, for which we know that every simplicial fan is identical to the normal fan. What remains is to establish a ``skeleton'' as in Theorem \ref{thm:skeleton}. This may be the hardest part, while the arguments in the proof of Theorem \ref{thm:main} may be mostly kept the same. To close the article, we mention the characterization of monotypic polytopes in \cite{bui2023every}, in corresponding to Theorem \ref{thm:strmono-char}, which may help establishing the ``skeleton''.

\begin{theorem}[Bui 2023 \cite{bui2023every}]
\label{thm:mono-char}
    The monotypy of an $n$-dimensional polytope $P$ is equivalent to: If some $n+1$ normals of $P$ are in conical position, then their positive hull contains another normal of $P$.
\end{theorem}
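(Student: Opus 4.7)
The plan is to prove both directions by passing through the McMullen--Schneider--Shephard characterization cited earlier: $P$ is monotypic if and only if every two disjoint primitive subsets $V_1,V_2\subseteq N(P)$ satisfy $\pos V_1\cap\pos V_2=\{0\}$. In both directions, the bridge between ``$n+1$ normals in conical position'' and ``two disjoint primitive subsets with intersecting positive hulls'' will be built from a single linear dependence together with Carath\'eodory-type trimming.

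For the forward direction, I would take $n+1$ normals $v_1,\dots,v_{n+1}$ in conical position and suppose for contradiction that their positive hull contains no other normal of $P$. Any nontrivial linear relation $\sum\alpha_iv_i=0$ must involve coefficients of both signs (separation from $0$), and the conical position rules out $|I^+|=1$ or $|I^-|=1$, since the lone vector on one side would then lie in the positive hull of the others. Setting $p=\sum_{i\in I^+}\alpha_iv_i=\sum_{i\in I^-}|\alpha_i|v_i$, which is nonzero by separation, I would apply Carath\'eodory to trim each side to a linearly independent subset $V^\pm\subseteq\{v_i:i\in I^\pm\}$ with $p\in\pos V^+\cap\pos V^-$. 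The contradiction hypothesis together with conical position will then force $V^+$ and $V^-$ to be primitive: any normal of $P$ in $\pos V^\pm\setminus V^\pm$ would either be a new normal inside $\pos\{v_1,\dots,v_{n+1}\}$ (contradicting the hypothesis) or coincide with some $v_j\notin V^\pm$, contradicting the conical position. This yields two disjoint primitive subsets with nontrivially intersecting positive hulls, violating MSS.

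For the backward direction I would argue by contrapositive: assuming MSS fails, I would produce $n+1$ normals in conical position whose positive hull contains no other normal. I would start from disjoint primitive $V_1,V_2$ with $p\in\pos V_1\cap\pos V_2\setminus\{0\}$ chosen so $|V_1|+|V_2|$ is minimal; minimality forces $p=\sum\lambda_vv=\sum\mu_ww$ with strictly positive coefficients. In the baseline case $|V_1|+|V_2|=n+1$, the dependence space of $V=V_1\cup V_2$ is one-dimensional, spanned by $(\lambda,-\mu)$: the mixed signs immediately give separation from $0$ and preclude any element lying in the positive hull of the others (such a relation would be a scalar multiple of $(\lambda,-\mu)$ with incompatible signs), so $V$ is in conical position. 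To rule out a normal $u\in N(P)\cap\pos V\setminus V$, any nonnegative representation of $u$ in terms of $V$ would have to use both sides (else primitivity forces $u\in V_1\cup V_2$); sliding along the unique dependence to zero out one coefficient should then produce a strictly smaller disjoint primitive pair still witnessing failure of MSS, contradicting minimality.

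The main obstacle will be the backward direction in the non-baseline cases. When $|V_1|+|V_2|>n+1$, one must select an $(n+1)$-element subset of $V_1\cup V_2$ preserving conical position and the absence of other normals in the positive hull; when $|V_1|+|V_2|<n+1$ (already possible for $n\ge 4$ with $|V_1|=|V_2|=2$), one must adjoin further normals of $P$ to reach $n+1$ without spoiling either property. Both operations must propagate minimality and primitivity through the construction, and I expect the heaviest technical work to lie in controlling how the subcones interact under deletion or insertion of a normal.
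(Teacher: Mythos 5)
The paper does not actually prove Theorem~\ref{thm:mono-char}; it is cited from \cite{bui2023every} with no proof given here, so there is nothing in the paper to compare your argument against. Assessing the proposal on its own merits: your forward direction is sound in outline. From $n+1$ normals in conical position whose positive hull contains no further normal, splitting a linear dependence into positive and negative parts, trimming each side by Carath\'eodory to linearly independent $V^+,V^-$, and checking primitivity via the two subcases you list (a normal in $\pos V^\pm$ is either a ``new'' normal inside $\pos\{v_1,\dots,v_{n+1}\}$, contradicting the hypothesis, or one of the $v_j$ itself, contradicting conical position) does produce a genuine violation of the McMullen--Schneider--Shephard criterion. You implicitly use that $p\neq 0$ forces both trimmed sets to be nonempty and that a singleton is primitive; these are fine but should be said.

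The backward direction, however, has a gap beyond the ones you flag. Even in your ``baseline case'' $|V_1|+|V_2|=n+1$, the dependence space of $V=V_1\cup V_2$ need not be one-dimensional: that requires $V$ to span $\mathbb R^n$, which does not follow from $|V|=n+1$. For example, in $\mathbb R^5$ one could have $V_1,V_2$ each a linearly independent triple inside a common $3$-dimensional subspace, with $|V_1|+|V_2|=6=n+1$ but $\dim\spn V=3$, so the dependence space is $3$-dimensional. With extra dependences available, the step where you argue separation from $0$ and the absence of any $v\in V$ in the positive hull of the others by classifying every dependence as a scalar multiple of $(\lambda,-\mu)$ breaks down. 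The ``sliding'' step is also underspecified: after zeroing out one coefficient in a nonnegative representation of a putative $u\in N(P)\cap\pos V\setminus V$, the remaining representation may still use both $V_1$ and $V_2$, and you do not explain how this yields a strictly smaller disjoint primitive pair witnessing an MSS violation rather than merely a shorter nonnegative combination. Combined with the non-baseline cases $|V_1|+|V_2|\neq n+1$, which you yourself acknowledge as unresolved (including the need to adjoin normals when $|V_1|+|V_2|<n+1$ without introducing a normal into the positive hull), the backward implication is essentially open as written.
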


\subsection*{Acknowledgements}
The author is grateful to Roman Karasev for his valuable comments on the manuscript, in particular for the argument of the existence of $\delta$.

\bibliographystyle{unsrt}
\bibliography{strmonohadwig}

\end{document}